\newtheorem{theorem}{Theorem}%[section]
\newtheorem{proposition}[theorem]{Proposition}
\newtheorem{conjecture}{Conjecture}
\theoremstyle{definition}
\renewcommand{\leq}{\leqslant}
\renewcommand{\geq}{\geqslant}
\renewcommand{\le}{\leqslant}
\renewcommand{\ge}{\geqslant}
\begin{document}

\title{On the number of divisors of Mersenne numbers}

\author[V. Kova\v{c}]{Vjekoslav Kova\v{c}}
\address{V. K., Department of Mathematics, Faculty of Science, University of Zagreb, Bijeni\v{c}ka cesta 30, 10000 Zagreb, Croatia}
\email{vjekovac@math.hr}

\author[F. Luca]{Florian Luca}
\address{F. L., Stellenbosch University, Mathematics, Merriman Street 7600 Stellenbosch, South Africa
and Max Planck Institute for Software Systems, Saarbr\"{u}cken, Germany}
\email{fluca@sun.ac.za}

%\date{\today}

\pagenumbering{arabic}

\begin{abstract}
Denote $f(n):=\sum_{1\le k\le n} \tau(2^k-1)$, where $\tau$ is the number of divisors function. Motivated by a question of Paul Erd\H{o}s, we show that the sequence of ratios $f(2n)/f(n)$ is unbounded. We also present conditional results on the divergence of this sequence to infinity. Finally, we give experimental evidence on both the conjecture $f(2n)/f(n)\to\infty$ and our sufficient conditions for it to hold.
\end{abstract}

\keywords{divisor function, Mersenne number, asymptotics, highly-composite number, cyclotomic polynomial}

\maketitle

\section{Introduction}

\subsection{Problem background}
The following problem is due to Paul Erd\H{o}s, who posed it in the proceedings \cite{893} of the conference \emph{Number Theory}, held in Eger, Hungary in 1996. Let 
$$
f(n):=\sum_{1\le k\le n} \tau(2^k-1)
$$
be the total number of divisors of the first $n$ \emph{Mersenne numbers} $2^k-1$.
After expressing the feeling that $f(n)$ increases too fast to allow an asymptotic formula as $n\to\infty$, Erd\H{o}s asked:
\begin{quote}
\emph{Is it true that
$$
f(2n)/f(n)
$$
tends to a limit?} \cite[p.\,180]{893}
\end{quote}
Recently, the question was also posed as Problem \#893 on Thomas Bloom's website \emph{Erd\H{o}s problems} \cite{EP}.

Erd\H{o}s did not write down his motivation to formulate this problem, but he was generally interested in the behaviour of the ``doubling ratio'' $f(2n)/f(n)$ of certain other summatory arithmetic functions \cite[p.\,180]{893}.
Asymptotics of sums of arithmetic functions over linearly recurrent sequences, like $2^k-1$, have been studied in larger generality \cite{ES96,LS07,Shp90}.
However, the current state-of-the-art still cannot handle sums of the classical arithmetic functions $\tau,\sigma,\phi$ (see Subsection \ref{subsec:notation} on the notation), but rather their ``tamed'' versions $\sigma(m)/m$ and $\phi(m)/m$.

Let us continue with a few words on the heuristics for $f(2n)/f(n)$, initially at a superficial level. 
Recall a trivial bound
\begin{equation}\label{eq:tauheur}
\frac{1}{n} \sum_{1\le k\le n} \tau(k) = \frac{1}{n} \sum_{1\le \ell\le n} \Big\lfloor \frac{n}{\ell} \Big\rfloor 
= \log n+O(1), 
\end{equation}
which had been improved already by Dirichlet; see Theorem 320 in Section 18.2 of the book \cite{HW}.
Because of the above asymptotics we can say that the function $\tau(n)$ behaves ``on the average'' as $\log n$. If $2^k-1$ behaved with respect to the number of divisors like a random integer of its size, then one could expect that $\tau(2^k-1)$ behaves ``on the average'' like $\log(2^k-1)$. 
Then
\begin{eqnarray*}
\sum_{1\le k\le n} \log(2^k-1) & = & \sum_{1\le k\le n} k\log 2+\sum_{1\le k\le n} \log\Big(1-\frac{1}{2^k}\Big)\\
& = & \frac{n(n+1)\log 2}{2}+O\Big(\sum_{1\le k\le n} \frac{1}{2^k}\Big)\\
& = & \frac{n^2\log 2}{2}+O(n)
\end{eqnarray*}
could lead us to further suspect that
$f(2n)/f(n)$ has a finite limit, which in fact should be equal to
$$
\lim_{n\to\infty} \frac{(1/2)(2n)^2\log2+O(n)}{(1/2)n^2\log2+O(n)} = 4.
$$

This informal reasoning makes plausible the conjecture that the ratio $f(2n)/f(n)$ stabilizes, but it is, at the same time, quite misleading. Namely, Theorem \ref{thm:limsup} below will imply that $\lim_{n\to\infty}f(2n)/f(n)$, if it even exists, cannot be a real number.
Numerical data presented in Section \ref{sec:numerics} will reveal that the above heuristics fails due to the fact that the summatory function $f$ of the quantities $\tau(2^k-1)$ is actually much ``less stable'' than predicted by \eqref{eq:tauheur}.
Nevertheless, the long term behaviour of $f(2n)/f(n)$ seems to be interesting and worth studying, both rigorously and experimentally, but we will have to employ better heuristics for the latter.

%%%%%

\subsection{Statements of the results}
Our first result simply claims that the doubling ratios $f(2n)/f(n)$ do not form a bounded sequence.

\begin{theorem}
\label{thm:limsup}
We have
$$
\limsup_{n\to\infty} \frac{f(2n)}{f(n)}=\infty,
$$
i.e., the sequence $(f(2n)/f(n))_{n\geq 1}$ is unbounded.
\end{theorem}

We will establish Theorem \ref{thm:limsup} by proving something stronger, but unrelated to the Mersenne numbers.
Namely, consider a modified summatory function defined as
$$
f'(n):=\sum_{1\le k\le n} 2^{\tau(k)}
$$
and rather ask whether $f'(2n)/f'(n)$ has a (possibly infinite) limit as $n\to\infty$.
Relationship between $f$ and $f'$ will be explained in Section \ref{sec:pfuncond}.
In particular, Inequality \eqref{eq:compare} below will show that $f'$ is, in fact, smaller than $f$, up to a multiplicative constant.
However, its doubling ratios turn out to be better behaved.

\begin{proposition}
\label{prop}
We have
$$
\lim_{n\to\infty} \frac{f'(2n)}{f'(n)}=\infty.
$$
\end{proposition}

The proof of Proposition \ref{prop} will be presented in Section \ref{sec:pfuncond} and it will use properties of highly--composite numbers, which were studied by Ramanujan \cite{Rama}, and also by Erd\H{o}s \cite{Erd1} and Nicolas \cite{Nic}.
Recall that a positive integer $N$ is \emph{highly--composite} if 
\[ \tau(N)>\tau(m) \] 
for all $1\le m<N$.
Theorem \ref{thm:limsup} will be easily derived from Proposition \ref{prop}.

\smallskip
The remaining problem is to decide whether $f(2n)/f(n)$ diverges to infinity, or it does not have any kind of limit.
We will also establish conjectural positive results in this direction, in the form of Theorem \ref{thm:cond} below, but only conditionally on,
\begin{itemize}
\item either a certain conjecture about the Mersenne numbers $2^N-1$ with more divisors than their predecessors (Conjecture \ref{conj1} below),
\item or a conjectured bound on the number of prime divisors of the $d$th cyclotomic polynomial evaluated at the number $2$ (Conjecture \ref{conj2} below).
\end{itemize}

To rigorously state the first conjecture we introduce a concept similar to highly--composite numbers, in which the attention is restricted to Mersenne numbers only.
Let us say that $N$ is an \emph{index of a highly--composite Mersenne number} if 
\[ \tau(2^N-1)>\tau(2^m-1) \] 
holds for all $1\le m<N$. They are motivated by the usefulness of highly--composite numbers in the proof of Proposition \ref{prop}. 
Note that the above definition does not mean that $2^N-1$ itself is a highly--composite number; it just means that $2^N-1$ has more divisors than any smaller Mersenne number $2^m-1$. Here is a reasonable conjecture about those numbers.

\begin{conjecture}
\label{conj1}
If $N$ is restricted to indices of highly--composite Mersenne numbers, then
\begin{equation}\label{eq:conjMratio}
\lim_{N\to\infty} \frac{\tau(2^N+1)}{N} = \infty.
\end{equation}
\end{conjecture}

An intuitive justification might be the following.
In Section \ref{sec:numerics} we will see that the number $\tau(2^N-1)$ is typically huge when $N$ is an index of a highly--composite Mersenne number. 
We expect this to be caused largely due to the fact that the polynomial $X^N-1$ splits into a large number of irreducible factors and, in turn, also that $N$ has many prime divisors.
In this situation, $X^N+1$ also splits into many factors and, consequently, $\tau(2^N+1)$ is necessarily at least somewhat large as well.
Some numerical evidence to support Conjecture \ref{conj1} will be presented in Section \ref{sec:numerics}.

To formulate the second conjecture recall that the $d$th \emph{cyclotomic polynomial} is denoted $\Phi_d$ and defined as
$$
\Phi_d(X):=\prod_{\substack{1\le h\le d\\ (h,d)=1}} (X-e^{2\pi i h/d}) \in\mathbb{Z}[X].
$$
We write $\omega(m)$ for the number of distinct prime factors of a positive integer $m$.

\begin{conjecture}
\label{conj2}
The inequality
\begin{equation}\label{eq:conjineq}
\omega(\Phi_d(2))\le 10\log d
\end{equation}
holds for all positive integers $d\ge 2$, with at most finitely many exceptions.
\end{conjecture}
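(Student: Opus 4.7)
The plan is to refine the trivial bound $\omega(\Phi_d(2)) \ll \phi(d)/\log\phi(d)$ coming from \eqref{eq:trivialforPhi} by exploiting the fact that every prime divisor of $\Phi_d(2)$, apart from the possible exceptional prime dividing $d$, has multiplicative order exactly $d$ modulo itself, and therefore lies in the arithmetic progression $1\pmod d$, so in particular is at least $d+1$. Combined with $\Phi_d(2)\le 3^{\phi(d)}$ this immediately yields the sharper elementary estimate
$$ \omega(\Phi_d(2)) \le 1 + \frac{\log \Phi_d(2)}{\log(d+1)} \ll \frac{\phi(d)}{\log d}, $$
which still exceeds the conjectured bound by a factor of order $\phi(d)/(\log d)^2$, so the essential work lies elsewhere.

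Heuristically the conjecture should not be surprising: $\Phi_d(2)$ has logarithmic size $\phi(d)\log 2$, so for a ``typical'' integer of that size the Erd\H{o}s--Kac law predicts $\omega \asymp \log\log \Phi_d(2) \asymp \log d$, matching \eqref{eq:conjineq} with the constant $10$ left as a generous safety margin. To promote this heuristic to a proof I would try a two-step strategy. First, I would isolate the squarefull part of $\Phi_d(2)$: applying the $abc$-conjecture to the equation $1+(2^d-1)=2^d$ bounds the squarefull part of $2^d-1$ by $\ll 2^{\epsilon d}$ for every $\epsilon>0$, so the primes dividing $\Phi_d(2)$ with multiplicity at least $2$ contribute $O(\epsilon d / \log d)=o(\log d)$ to $\omega$. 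Second, I would bound the number of primes $p \equiv 1\pmod d$ with $p\le Y$ that can divide $\Phi_d(2)$ for a well-chosen threshold $Y$ slightly larger than $2^{\phi(d)/\log d}$, so that the residual primes $p>Y$ automatically contribute at most $\log \Phi_d(2)/\log Y \ll \log d$ by size considerations alone.

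The main obstacle is the second step. Unconditionally bounding the number of small primes in the progression $1\pmod d$ that simultaneously divide a \emph{prescribed} integer is exactly the sort of sieve problem for which no suitable tool is available, and this is presumably why the inequality \eqref{eq:conjineq} is formulated only as a conjecture. Even granting the $abc$-conjecture, reaching the sharp $O(\log d)$ target would seem to require quantitative Erd\H{o}s--Kac-type control specialised to the sequence $\Phi_d(2)$, which is currently out of reach. A conceivable weaker fallback, possibly provable without $abc$, would replace $10\log d$ in \eqref{eq:conjineq} by something like $\phi(d)^{1/2+o(1)}$ after a more careful analysis of the exponents $e_i$, but this is still far from the statement required.
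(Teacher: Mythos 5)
The statement you were asked about is a \emph{conjecture}: the paper offers no proof of it, only a heuristic argument and numerical evidence, so there is no proof to measure your attempt against. You correctly recognise this and do not claim a proof; your assessment that the statement is out of reach of current methods (even assuming $abc$, which controls only the squarefull part of $2^d-1$ and hence multiplicities rather than $\omega$) agrees with the authors' implicit position. Your heuristic is in the same spirit as theirs --- both treat $\Phi_d(2)$ as a ``random'' integer of size at most $3^{\phi(d)}$, for which the number of prime factors should be comparable to $\log\log 3^{\phi(d)}\asymp\log d$ --- but the paper's version is more pointed in one respect that matters for the exact form of the statement. The authors do not merely invoke the Erd\H{o}s--Kac normal order; they use the Hall--Tenenbaum bound $|\{n\le x:\Omega(n)>K\}|\ll Kx\log x/2^K$ with $K=10\log d$ to get a ``failure probability'' $\ll d^{-5}$ for each $d$, and then the convergence of $\sum_d d^{-5}$ is what (Borel--Cantelli style) supports the precise claim of \emph{at most finitely many exceptions}. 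A normal-order heuristic alone would only suggest the bound holds for almost all $d$, which is a weaker assertion. One small quibble: your ``sharper'' elementary bound $\omega(\Phi_d(2))\le 1+\log\Phi_d(2)/\log(d+1)$, obtained from $p\equiv 1\pmod d$ for the non-exceptional primes, is correct but is not an improvement in order of magnitude over the paper's $\phi(d)/\log\phi(d)$, since $\log\phi(d)\asymp\log d$; both are $\asymp\phi(d)/\log d$.
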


A natural way of arriving at Conjecture \ref{conj2} and its heuristic confirmation will be presented in Subsection \ref{sec:heuronConj2}.

With either of the two assumptions, the limit \eqref{eq:conjMratio} or Inequality \eqref{eq:conjineq}, we can prove the claim on the divergence of the doubling ratios of $f$.

\begin{theorem}
\label{thm:cond}
If either of Conjectures \ref{conj1} and \ref{conj2} holds, then
\begin{equation}\label{eq:limisinfty}
\lim_{n\to\infty} \frac{f(2n)}{f(n)}=\infty.
\end{equation}
\end{theorem}

The proof of Theorem \ref{thm:cond} will span over the most of Section \ref{sec:pfcond}.

\smallskip
Finally, a large part of the present paper, namely the whole Section \ref{sec:numerics}, is dedicated to extensive computation and experimentation that support both the divergence claim $f(2n)/f(n)\to\infty$ and Conjectures \ref{conj1} and \ref{conj2}.

In Subsection \ref{sec:numexact} we evaluate, table, and plot the involved arithmetic quantities for as many positive integers $n$ as we can. In the process we use the readily available tables \cite{NoDivPlus,NoDivCyc,NoDivMinus} of the numbers $\tau(2^n-1)$ and $\omega(\Phi_n(2))$ for $n\leq 1206$, and the numbers $\tau(2^n+1)$ for $n\leq 1128$. This is about as far as the exact numerical evaluation can take us.
Namely, the \emph{Cunningham Project} \cite{CuPr}, initiated about a hundred years ago, attempts to factor numbers of the form $b^n\pm 1$ for $b\leq 12$ and as many values of $n$ as possible. Its typical approach is the Elliptic Curve Method \cite[Sec.~15.10]{Duj} and any further progress is expected to be rather slow.
However, even with this limited amount of data we illustrate the correctness of the hypothetical divergence claim and the above two conjectures.

Wanting to progress even further with the experimental support for $f(2n)/f(n)\to\infty$, we employ a mixture of an exact setting and a heuristic one to predict the numbers of divisors of large Mersenne numbers. More precisely, in Section \ref{sec:numprob} we partially factor $2^n-1$ into primes that are less than a given large threshold $A$ and then apply classical Gillies' approximation \cite{Gil64} to the number of its prime divisors greater than $A$.
We first test that our approximations to $\tau(2^n-1)$ are of the same order of magnitude as the exact values for $1000<n\leq1100$. Then we extend the previously obtained graphs of the doubling ratios $f(2n)/f(n)$ from $n\leq603$ all the way to $n\leq1000$, with the same conclusion as before.

%%%%%

\subsection{Notation}
\label{subsec:notation}
Throughout the paper we write $\tau(n)$ for the number of divisors of $n$, while $\omega(n)$ (resp.\@ $\Omega(n)$) is the number of prime factors of $n$ without (resp.\@ with) counting multiplicities.
The greatest common divisor of positive integers $m$ and $n$ is written simply as $(m,n)$, while $\phi(n)$ stands for the number of integers $m$ such that $1\leq m\leq n$ and $(m,n)=1$ (i.e., for \emph{Euler's totient function}).
Next, $\lfloor x\rfloor$ represents the largest integer not exceeding a real number $x$ (i.e., the \emph{floor} of $x$).
The logarithm with base $b$ is written as $\log_b$, while $\log$ means the natural logarithm, namely $\log_e$.
Notation $|S|$ is used for the number of elements of a finite set $S$.
If $g$ and $h$ are two real-valued functions defined on the set of positive integers, then we write $g(n) \ll h(n)$ and $h(n) \gg g(n)$ whenever $\limsup_{n\to\infty}|g(n)/h(n)|<\infty$.
Likewise, $g(n)=O(h(n))$ is yet another way of writing $g(n)\ll h(n)$, while $g(n)=o(h(n))$ simply means that $\lim_{n\to\infty}g(n)/h(n)=0$.

%%%%

\section{Proofs of unconditional results}
\label{sec:pfuncond}
One can naturally arrive at the quantity $2^{\tau(k)}$, when starting from $\tau(2^k-1)$. 
Namely, recall that Bang \cite{Bang} (and also Zsigmondy \cite{Zs}) proved that $2^m-1$ has a primitive prime divisor (a prime factor $p$ that does not divide $2^{\ell}-1$ for any positive integer $\ell<m$) for all positive integers $m$ except for $m=1,6$. Since $2^d-1\mid 2^k-1$ for all divisors $d$ of $k$, it follows that $2^k-1$ has at least as many prime factors as
the number of divisors of $k$ different than $1$ or $6$. Thus, $\omega(2^k-1)\ge \tau(k)-2$, which leads to
\begin{equation}
\label{eq:compare0}
\tau(2^k-1)\ge 2^{\omega(2^k-1)}\ge \frac{1}{4} 2^{\tau(k)}
\end{equation}
and then to
\begin{equation}
\label{eq:compare}
f(n) \ge \frac{1}{4} f'(n).
\end{equation}

Let us now turn to the result concerned with the function $f'$.

\begin{proof}[Proof of Proposition \ref{prop}]
It suffices to show that
$$
\frac{\sum_{n<k\le 2n} 2^{\tau(k)}}{\sum_{1\le k\le n} 2^{\tau(k)}}
$$
tends to infinity. Let $N$ be the largest highly--composite number not exceeding $n$. Clearly, $N>n/2$ because $\tau(2m)>\tau(m)$ holds for any positive integer $m$.
Then $2N\in (n,2n]$, so
\begin{equation}\label{eq:thefirst}
\frac{\sum_{n<k\le 2n} 2^{\tau(k)}}{\sum_{1\le k\le n} 2^{\tau(k)}} \ge \frac{2^{\tau(2N)}}{n2^{\tau(N)}} > \frac{2^{\tau(2N)}}{2N2^{\tau(N)}} =2^{\tau(2N)-\tau(N)+O(\log N)}.
\end{equation}
It is known that
\begin{equation}\label{eq:thenodiv}
\tau(N)=2^{(1+o(1))\log N/\log\log N}\qquad {\text{\rm as }} N\to\infty.
\end{equation}
Namely, the upper bound holds for completely general $N$ due to a classical result by Wigert (see Theorem 317 in \cite{HW}), while the lower bound is specific to highly--composite numbers, among other alike sequences (and it easily follows from Ramanujan's considerations in \S5 of \cite{Rama}).\footnote{If $p_k$ stands for the $k$th prime and $N$ is a highly--composite number such that $2\cdot 3\cdots p_t \leq N<2\cdot 3\cdots p_t p_{t+1}$ for some $t\geq1$, then $\tau(N)\geq \tau(2\cdot 3\cdots p_t) = 2^t = 2^{\pi(p_t)}$, where $\pi(x):=|\{p\leq x:p\text{ prime}\}|$ denotes the prime-counting function. On the other hand, $\log N = (1+o(1)) \vartheta(p_t)$, where $\vartheta(x):=\sum_{p\leq x, p\,\text{prime}}\log p$ is the Chebyshev function, so the lower bound in \eqref{eq:thenodiv} follows from basic asymptotic properties of $\pi$ and $\vartheta$ or from Formula (25) in \S5 of \cite{Rama}.}
Writing $N$ as in \S8 of \cite{Rama},
$$
N=2^{e_1} 3^{e_2}\cdots p_t^{e_t},\qquad {\text{\rm with }} e_1\ge e_2\ge \cdots\ge e_t,
$$
where $p_k$ is the $k$th prime, we have
$$
\tau(N)=(e_1+1)(e_2+1)\cdots(e_t+1)\qquad {\text{\rm and}}\qquad \tau(2N)=(e_1+2)(e_2+1)\cdots(e_t+1).
$$
Thus,
$$
\tau(2N)-\tau(N)=(e_2+1)\cdots(e_t+1)=\frac{\tau(N)}{e_1+1}\gg \frac{\tau(N)}{\log N}.
$$
Hence, using \eqref{eq:thenodiv} we conclude that the exponent in \eqref{eq:thefirst} satisfies
\begin{eqnarray*}
\tau(2N)-\tau(N)+O(\log N) & \gg &  \frac{\tau(N)}{\log N}+O(\log N)\\
& = & \frac{2^{(1+o(1))\log N/\log\log N}}{\log N}+O(\log N)\\
& = & 2^{(1+o(1))\log N/\log\log N}\\
& = & 2^{(1+o(1))\log n/\log\log n},
\end{eqnarray*}
so it tends to infinity as $n\to\infty$, which is what we wanted.
\end{proof}

Our main unconditional result, namely Theorem \ref{thm:limsup}, is now an immediate consequence of Proposition \ref{prop}.

\begin{proof}[Proof of Theorem \ref{thm:limsup}]
If the sequence of ratios $f(2n)/f(n)$ was bounded from above by some $C<\infty$, then we would have $f(2^m)\leq C^{m}$ for every integer $m\geq1$.
On the other hand, by \eqref{eq:compare} and Proposition \ref{prop},
$$
f(2^m) \geq \frac{1}{4} f'(2^m) = \frac{1}{2} \prod_{\ell=0}^{m-1} \frac{f'(2^{\ell+1})}{f'(2^\ell)}
$$
grows faster than $C^m$ as $m\to\infty$ for any finite constant $C$.
\end{proof}

%%%%%

\section{Proofs of conditional results}
\label{sec:pfcond}

In the next stage one may wonder about the true nature of the function $f$. 
\begin{itemize}
\item Is it more likely that the numbers $f(n)$ occasionally grow like a polynomial in $n$, which would be the case if $\tau(2^k-1)$ behaved on the average like $\log(2^k-1)$ as suggested by the introductory computation \eqref{eq:tauheur}, or 
\item is it more likely that $f(2n)/f(n)$ tends to infinity, as it was the case for the function $f'$ obtained by replacing $\tau(2^k-1)$ with $2^{\tau(k)}$?
\end{itemize}

\subsection{Reduction to highly--composite Mersenne numbers}
The proof of the first conditional result is similar to the proof of Proposition \ref{prop}.

\begin{proof}[Proof of Theorem \ref{thm:cond} assuming Conjecture \ref{conj1}]
It suffices to consider
$$
\frac{\sum_{n<k\le 2n} \tau(2^k-1)}{\sum_{1\le k\le n} \tau(2^k-1)}.
$$
Let $N$ be the largest index of a highly--composite Mersenne number that satisfies $N\leq n$. Clearly, $N>n/2$, because $2^{2m}-1=(2^m-1)(2^m+1)$ and so $\tau(2^{2m}-1)>\tau(2^m-1)$. We then have $2N\in (n,2n]$ and thus,
$$
\frac{\sum_{n<k\le 2n} \tau(2^k-1)}{\sum_{1\le k\le n} \tau(2^k-1)}\ge \frac{\tau(2^{2N}-1)}{n\tau(2^N-1)}\ge \frac{\tau((2^N-1)(2^N+1))}{2N\tau(2^N-1)}=\frac{\tau(2^N+1)}{2N}.
$$
If we assume Conjecture \ref{conj1}, we see that the last expression diverges to $\infty$ as $n\to\infty$ (and thus also $N\to\infty$).
\end{proof}

\subsection{Heuristic justification of Conjecture \ref{conj2}}
\label{sec:heuronConj2}
Our next goal is to arrive naturally at Conjecture \ref{conj2}.
Write
$$
2^n-1=\prod_{i=1}^t q_i^{e_i}
$$
where $q_1<\cdots<q_t$ are distinct primes and $e_1,\ldots,e_t$ are positive integer exponents.
Clearly,
$$
e_i\leq \frac{\log(2^n-1)}{\log q_i} < \frac{\log 2^n}{\log 2} = n.
$$
Better bounds are known (see, for example, Theorem 1.2 in Stewart \cite{Ste}), but we will not need them. We thus have that
\begin{equation}
\label{eq:k}
\tau(2^n-1)=\prod_{i=1}^t (e_i+1)\le n^t = 2^{t\log n/\log 2} < 2^{2\omega(2^n-1)\log n}.
\end{equation}
The bottleneck of our approach consists of getting a good upper bound on $\omega(2^n-1)$. We factor $X^n-1$ into cyclotomic polynomials as
$$
\prod_{d\mid n} \Phi_d(X)
$$
and thus obtain
$$
2^n-1=\prod_{d\mid n} \Phi_d(2).
$$
In particular,
\begin{equation}\label{eq:omega}
\omega(2^n-1) \leq \sum_{d\mid n} \omega(\Phi_d(2)).
\end{equation}
We do not expect much overcounting on the right-hand side of \eqref{eq:omega}, since it is known that all prime factors of $\Phi_d(2)$ are primes $p$ such that the multiplicative order of $2$ modulo $p$ is exactly $d$, which are mutually disjoint for different $d$, except that $\Phi_d(2)$ might also be divisible by the largest prime factor of $d$. 

It remains to make an educated guess concerning $\omega(\Phi_d(2))$. Trivially, from the definition of $\Phi_d$,
\begin{equation}\label{eq:trivialforPhi}
\Phi_d(2)\le 3^{\phi(d)}.
\end{equation}
Using the well-known bound\footnote{We have
$$ \frac{\log n}{\log\log n} \geq \frac{\log(2\cdot 3\cdots p_{\omega(n)})}{\log\log(2\cdot 3\cdots p_{\omega(n)})}
= \frac{\vartheta(p_{\omega(n)})}{\log\vartheta(p_{\omega(n)})} \gg \pi(p_{\omega(n)}) = \omega(n), $$
where we again used an easy bound (25) from \S5 of \cite{Rama}.}
\[ \omega(n) \ll \frac{\log n}{\log\log n}, \]
we arrive at
$$
\omega(\Phi_d(2))\ll \frac{\log 3^{\phi(d)}}{\log\log 3^{\phi(d)}}\ll \frac{\phi(d)}{\log \phi(d)},
$$
but the last estimate is too large to be useful in combination with \eqref{eq:omega}.
Thus, we are led to impose a stronger assumption like a logarithmic bound from Conjecture \ref{conj2}.

\smallskip
Now we offer some heuristic support for Conjecture \ref{conj2}, while numerical experimentation is postponed to Section \ref{sec:numerics}. Exercise 0.4 on page 12 in the book by Hall and Tenenbaum \cite{div} shows that, uniformly in $K\ge 1$ and $x\ge 2$,
$$
|\{n\le x: \Omega(n)=K\}|\ll \frac{xK \log x}{2^K}.
$$
In particular,
$$
|\{n\le x: \Omega(n)> K\}|\ll x\log x\sum_{k> K} \frac{k}{2^k}\ll \frac{Kx\log x}{2^K}.
$$
We can interpret this by saying that, for a ``random'' positive integer $n$, the probability of having more than $K$ prime factors (even counting multiplicities) is
$$
\ll \frac{K\log n}{2^K}.
$$
We apply this heuristic with $n=\Phi_d(2)$ and $K=10\log d$, and also use \eqref{eq:trivialforPhi}, getting that the probability of $\omega(\Phi_d(2))> 10\log d$ is
$$
\ll \frac{(10\log d) \log(\Phi_d(2))}{2^{10\log d}}\ll \frac{\phi(d)\log d}{d^{10\log 2}}\leq \frac{\log d}{d^{10\log 2-1}}\ll \frac{1}{d^5}.
$$
Since the series
$$
\sum_{d\ge 2} \frac{1}{d^5}
$$
converges, it may be indeed the case, in analogy with the probabilistic Borel--Cantelli lemma, that there are only finitely many positive integers $d$ that violate \eqref{eq:conjineq}. Therefore, Conjecture \ref{conj2} is likely to hold.

\subsection{Reduction to cyclotomic polynomials}
Let us now prove the other conditional result.

\begin{proof}[Proof of Theorem \ref{thm:cond} assuming Conjecture \ref{conj2}]
Suppose that $d_0\geq2$ is an integer such that \eqref{eq:conjineq} holds for every integer $d\geq d_0$.
It is sufficient to verify conditionally Conjecture \ref{conj1} and the claim \eqref{eq:limisinfty} will then follow from the previous conditional result.
Thus, suppose that $N$ is an index of a highly--composite Mersenne number.

We need to say something about $\tau(2^N+1)$, so let us first take a closer look at $\tau(2^N-1)$. By inequality \eqref{eq:k} we have
$$
\tau(2^N-1)\le 2^{2\omega(2^N-1)\log N},
$$
while, by estimates \eqref{eq:conjineq} and \eqref{eq:omega},
\begin{align*}
\omega(2^N-1) & \le \sum_{d< d_0} \omega(\Phi_d(2))+\sum_{\substack{d\ge d_0\\ d\mid N}} 10\log d \\
& \leq O(1) + 10\tau(N)\log N \ll \tau(N)\log N .
\end{align*}
Thus, under Conjecture \ref{conj2} we know
\begin{equation}
\label{eq:100}
\tau(2^N-1)\le 2^{O(\tau(N)(\log N)^2)}.
\end{equation}
On the other hand, by the fact that $N$ is an index of a highly--composite Mersenne number and relation \eqref{eq:compare0}, we have that
$$
\tau(2^N-1)\ge \max_{1\leq k\leq N} \tau(2^k-1)\ge \max_{1\leq k\leq N} 2^{\tau(k)-2}.
$$
Choose $K$ to be the largest highly--composite number not exceeding $N$. Then $K>N/2$, and by \eqref{eq:thenodiv} we get
\begin{equation}
\label{eq:100b}
\log_2 \tau(2^N-1)\ge \tau(K)-2=2^{(1+o(1))\log K/\log\log K}=2^{(1+o(1))\log N/\log\log N}.
\end{equation}
Equations \eqref{eq:100} and \eqref{eq:100b} now show that
$$
\tau(N)(\log N)^2\gg 2^{(1+o(1))\log N/\log\log N},
$$
so
$$
\tau(N)\gg 2^{(1+o(1))\log N/\log\log N}.
$$
Write $N=2^a M$, where $M$ is odd and $a\geq0$ is an integer. Then we have
$$
\tau(N)=(a+1)\tau(M)\ll \tau(M)\log N,
$$
which implies
$$
\tau(M)\ge 2^{(1+o(1))\log N/\log\log N}.
$$
The number $N$ has $\tau(M)$ divisors $d$ such that $N/d$ is odd and $2^d+1\mid 2^N+1$ for all such divisors $d$. By Bang's theorem \cite{Bang} again,
$$
\tau(2^N+1)\ge 2^{\tau(M)-2}.
$$
Hence, we get that
$$
\frac{\tau(2^N+1)}{N}\ge \frac{2^{\tau(M)-2}}{N}=2^{\tau(M)+O(\log N)}
\geq 2^{2^{(1+o(1))\log N/\log\log N}},
$$
which diverges to infinity as $N\to\infty$.
\end{proof}

%%%%%

\section{Experimental evidence}
\label{sec:numerics}

\subsection{Exact numerical computation}
\label{sec:numexact}

\begin{figure}
\includegraphics[width=0.55\linewidth]{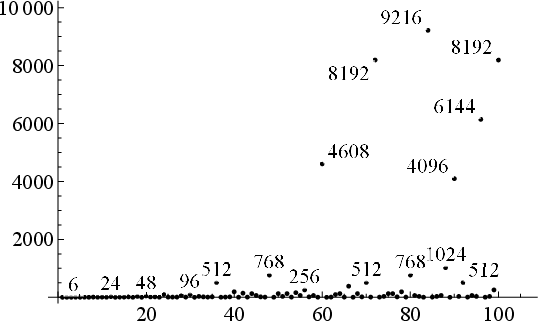}
\caption{Values of $\tau(2^n-1)$ for $1\leq n\leq 100$.}
\label{fig:ratio1}
\end{figure}

The number of divisors of Mersenne numbers behaves quite ``erratically'' and exhibits occasional huge jumps; see Figure \ref{fig:ratio1}.
In order to perform more substantial experimentation, one can compute $\tau(2^n-1)$ for $1\leq n\leq 250$ on a home computer using the Wolfram \emph{Mathematica} \cite{WM} command \verb+DivisorSigma[0,2^n-1]+, but for larger values of $n$ it is necessary to use faster tools for integer factorization, such as Alpern's integer factorization calculator \emph{Alpertron} \cite{Alp}.
A long-running project dedicated to factoring numbers of the form $b^n\pm 1$ for small $b$ and large $n$ is the \emph{Cunningham Project} \cite{CuPr}, while a ready-to-use list of $\tau(2^n-1)$ for $n\leq 1206$ was produced by Eldar \cite{NoDivMinus}, as a part of the entry for the Encyclopedia \emph{OEIS} \cite{OEIS} sequence A046801.

\begin{figure}
\includegraphics[width=0.47\linewidth]{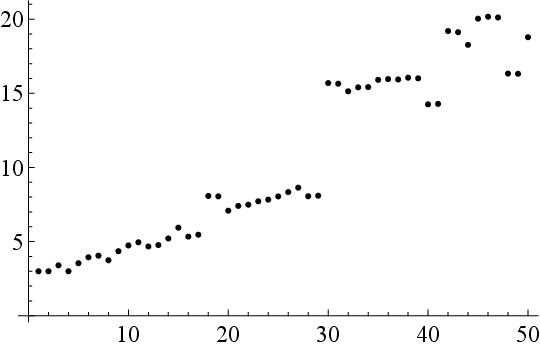}
\includegraphics[width=0.47\linewidth]{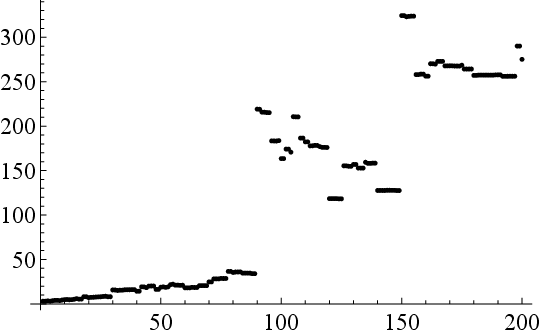}
\includegraphics[width=0.47\linewidth]{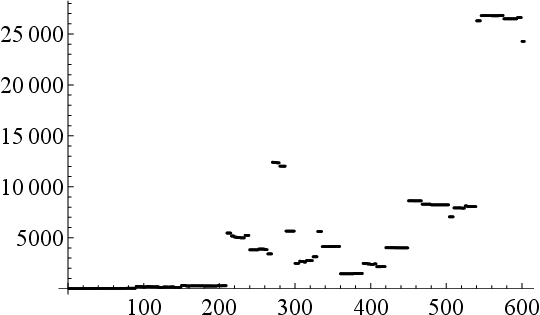}
\caption{Numerical values of the ratios $f(2n)/f(n)$ for $1\leq n\leq 50$, $1\leq n\leq 200$, and $1\leq n\leq 603$, respectively.}
\label{fig:ratio234}
\end{figure}

Using \cite{NoDivMinus} we calculated the exact values of $f(2n)/f(n)$ for $1\leq n\leq 603$ and illustrated them in Figure \ref{fig:ratio234}. It seems that these ratios exhibit occasional large jumps responsible for $f(2n)/f(n)\to\infty$. What we appear to be observing is that, as soon as $2n\geq k$ for a Mersenne number $2^k-1$ with a record-breaking number of divisors, the quotient $f(2n)/f(n)$ also becomes large, and it does not have time to decrease too much before the next huge jump of $\tau(2^k-1)$ occurs.

\begin{table}
\begin{center}
\begin{tabular}[t]{|r|r|c|}
\hline
$N$ & $\tau(2^N-1)$ & $\tau(2^N+1)/N$ \\
\hline\hline
$1$ & $1$ & $2$ \\
\hline
$2$ & $2$ & $1$ \\
\hline
$4$ & $4$ & $0.5$ \\
\hline
$6$ & $6$ & $0.6667$ \\
\hline
$8$ & $8$ & $0.25$ \\
\hline
$12$ & $24$ & $0.3333$ \\
\hline
$18$ & $32$ & $0.8889$ \\
\hline
$20$ & $48$ & $0.2$ \\
\hline
$24$ & $96$ & $0.3333$ \\
\hline
$36$ & $512$ & $0.4444$ \\
\hline
$48$ & $768$ & $0.1667$ \\
\hline
$60$ & $4\,608$ & $0.2667$ \\
\hline
$72$ & $8\,192$ & $0.4444$ \\
\hline
$84$ & $9\,216$ & $0.3810$ \\
\hline
$108$ & $10\,240$ & $0.5926$ \\
\hline
\end{tabular}
\quad
\begin{tabular}[t]{|r|r|c|}
\hline
$N$ & $\tau(2^N-1)$ & $\tau(2^N+1)/N$ \\
\hline\hline
$120$ & $73\,728$ & $0.5333$ \\
\hline
$144$ & $262\,144$ & $0.8889$ \\
\hline
$168$ & $294\,912$ & $1.5238$ \\
\hline
$180$ & $6\,291\,456$ & $1.4222$ \\
\hline
$288$ & $33\,554\,432$ & $0.4444$ \\
\hline
$300$ & $100\,663\,296$ & $1.7067$ \\
\hline
$360$ & $1\,610\,612\,736$ & $2.8444$ \\
\hline
$420$ & $57\,982\,058\,496$ & $9.7524$ \\
\hline
$540$ & $257\,698\,037\,760$ & $60.6815$ \\
\hline
$660$ & $463\,856\,467\,968$ & $397.1879$ \\
\hline
$720$ & $1\,649\,267\,441\,664$ & $45.5111$ \\
\hline
$780$ & $1\,855\,425\,871\,872$ & $42.0103$ \\
\hline
$840$ & $237\,494\,511\,599\,616$ & $624.1524$ \\
\hline
$900$ & $281\,474\,976\,710\,656$ & $36.4089$ \\
\hline
$1080$ & $8\,444\,249\,301\,319\,680$ & $242.7259$ \\
\hline
\end{tabular}
\end{center}
\caption{The first $30$ indices of highly--composite Mersenne numbers.}
\label{tab:hcMersenne}
\end{table}

We turn to Conjecture \ref{conj1}. In Table \ref{tab:hcMersenne} we made a list of all positive integers $N\le 1206$ that are indices of highly--composite Mersenne numbers. There are $30$ such values of $N$ and they are also listed as sequence A177710 in the Encyclopedia \emph{OEIS}.
In the last column we also computed the corresponding ratios $\tau(2^N+1)/N$ using Alekseyev's list \cite{NoDivPlus} of the numbers $\tau(2^n+1)$ for $n\leq 1128$, which is a part of the \emph{OEIS} entry A046798 for this sequence.
The table shows the trend $\tau(2^N+1)/N\to\infty$, even though this is somewhat open to interpretation, as we quickly run out of available data.

\begin{table}
\begin{center}
\begin{tabular}[t]{|r|r|c|}
\hline
$d$ & $\omega(\Phi_d(2))$ & $\omega(\Phi_d(2))/\log d$ \\
\hline\hline
$1$ &  $0$ &  \\
\hline
$2$ &  $1$ &  $1.4427$ \\
\hline
$3$ &  $1$ &  $0.9102$ \\
\hline
$4$ &  $1$ &  $0.7213$ \\
\hline
$5$ &  $1$ &  $0.6213$ \\
\hline
$6$ &  $1$ &  $0.5581$ \\
\hline
$7$ &  $1$ &  $0.5139$ \\
\hline
$8$ &  $1$ &  $0.4809$ \\
\hline
$9$ &  $1$ &  $0.4551$ \\
\hline
$10$ &  $1$ &  $0.4343$ \\
\hline
$11$ &  $2$ &  $0.8341$ \\
\hline
$12$ &  $1$ &  $0.4024$ \\
\hline
$13$ &  $1$ &  $0.3899$ \\
\hline
$14$ &  $1$ &  $0.3789$ \\
\hline
$15$ &  $1$ &  $0.3693$ \\
\hline
$16$ &  $1$ &  $0.3607$ \\
\hline
$17$ &  $1$ &  $0.3530$ \\
\hline
$18$ &  $2$ &  $0.6920$ \\
\hline
$19$ &  $1$ &  $0.3396$ \\
\hline
$20$ &  $2$ &  $0.6676$ \\
\hline
\end{tabular}
\quad
\begin{tabular}[t]{|r|r|c|}
\hline
$d$ & $\omega(\Phi_d(2))$ & $\omega(\Phi_d(2))/\log d$ \\
\hline\hline
$21$ &  $2$ &  $0.6569$ \\
\hline
$22$ &  $1$ &  $0.3235$ \\
\hline
$23$ &  $2$ &  $0.6379$ \\
\hline
$24$ &  $1$ &  $0.3147$ \\
\hline
$25$ &  $2$ &  $0.6213$ \\
\hline
$26$ &  $1$ &  $0.3069$ \\
\hline
$27$ &  $1$ &  $0.3034$ \\
\hline
$28$ &  $2$ &  $0.6002$ \\
\hline
$29$ &  $3$ &  $0.8909$ \\
\hline
$30$ &  $1$ &  $0.2940$ \\
\hline
$31$ &  $1$ &  $0.2912$ \\
\hline
$32$ &  $1$ &  $0.2885$ \\
\hline
$33$ &  $1$ &  $0.2860$ \\
\hline
$34$ &  $1$ &  $0.2836$ \\
\hline
$35$ &  $2$ &  $0.5625$ \\
\hline
$36$ &  $2$ &  $0.5581$ \\
\hline
$37$ &  $2$ &  $0.5539$ \\
\hline
$38$ &  $1$ &  $0.2749$ \\
\hline
$39$ &  $2$ &  $0.5459$ \\
\hline
$40$ &  $1$ &  $0.2711$ \\
\hline
\end{tabular}
\end{center}
\caption{Numbers of prime divisors of $\Phi_d(2)$ for $d\leq 40$.}
\label{tab:cyclo}
\end{table}

Now we turn to Conjecture \ref{conj2}. Table \ref{tab:cyclo} presents $\omega(\Phi_d(2))$ and $\omega(\Phi_d(2))/\log d$ for $d\leq40$, but this amount of data is too small to conclude anything about an upper bound for the latter quotient.
\emph{Mathematica} commands \verb+PrimeNu[Cyclotomic[d,2]]+ can again easily compute numbers of prime factors of $\Phi_d(2)$ for $d$ up to about $250$. Luckily, the numbers $\Phi_d(2)$ and $\omega(\Phi_d(2))$ are respectively listed as sequences A019320 and A085021 in \emph{OEIS}.
For the latter one Alekseyev produced a table \cite{NoDivCyc} of values $\omega(\Phi_d(2))$ for $d\leq1206$, and this list is depicted in Figure \ref{fig:cyclo12}. It is now reasonable to suspect a logarithmic upper bound in $d$ on the numbers $\omega(\Phi_d(2))$.
Namely, direct verification shows
\[ \omega(\Phi_d(2)) \leq 1.51 \log d \quad\text{for every } 1\leq d\leq 1206, \]
providing some evidence for Conjecture \ref{conj2}.

\begin{figure}
\includegraphics[width=0.47\linewidth]{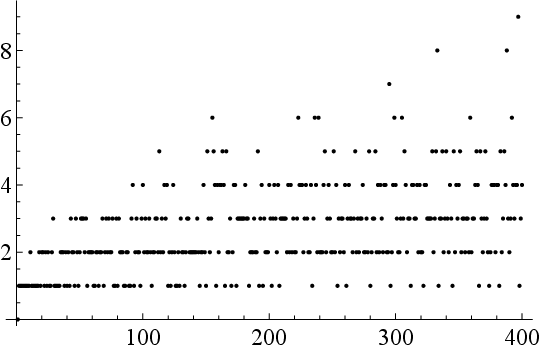}
\includegraphics[width=0.47\linewidth]{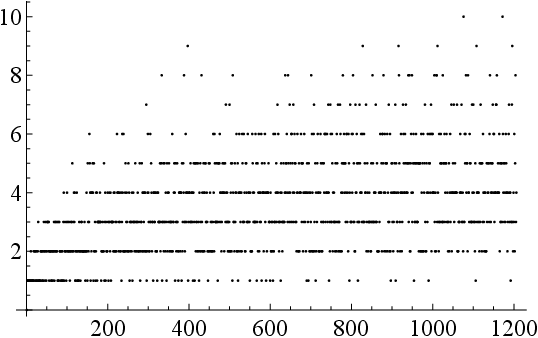}
\caption{Values of $\omega(\Phi_d(2))$ for $1\leq d\leq 400$ and $1\leq d\leq 1206$, respectively.}
\label{fig:cyclo12}
\end{figure}

%%%%%

\subsection{Partially heuristic model}
\label{sec:numprob}
Finally, in order to check the growth of $f(2n)/f(n)$ for even larger integers $n$, we combine the exact computation with an informal but reasonable heuristic.

Let us factor partially,
\[ 2^n - 1 = p_1^{e_1} p_2^{e_2} \cdots p_t^{e_t} M, \]
where $p_1,\ldots,p_t$ are all primes less than a given threshold $A$, while $M$ is not divisible by any of them. The threshold $A$ is allowed to depend on $n$ and it is chosen such that the partial factorization completes in under a minute on a home computer with standard performances.
Then we reason that a ``random'' integer in an interval $[t,t+\Delta t]$ is prime with probability about $1/\log t$, by the prime number theorem, as soon as $\Delta t$ is large but much smaller than $t$. The probability that such a number is both prime and divides a given integer $M\gg t$ should then be about $1/(t\log t)$ and, consequently, the number $M$ should have about
\[ \lambda = \int_{A}^{\sqrt{M}} \frac{\textup{d}t}{t\log t}  = \log\frac{\log\sqrt{M}}{\log A} \]
prime divisors in the interval $[A,\sqrt{M}]$.
We expect these prime divisors to typically be different, so a reasonable approximation to the number of divisors of Mersenne numbers is
\begin{equation}\label{eq:mersapprox}
\tau(2^n - 1) \approx (e_1+1)(e_2+1)\cdots(e_t+1) 2^\lambda.
\end{equation}
Note that already Gillies \cite{Gil64} used this reasoning to approximate the number of divisors of the numbers $2^p-1$ for prime $p$ and even conjectured that this number behaves like a Poisson-distributed random variable with mean $\lambda$. His probabilistic model was statistically tested, criticized and refined by subsequent authors \cite{Ehr67,Wag83}, but those subtleties do not seem to make a difference here, as soon as we are taking $A$ to be quite large.

In order to test how good approximation \eqref{eq:mersapprox} really is, we compare it with the true values of $\tau(2^n - 1)$ for $1001\leq n\leq 1100$. Base $10$ logarithms of the ratios 
\[ \frac{\text{approximate value}}{\text{true value}} \]
are depicted in Figure \ref{fig:testratio} and we can really see that the right-hand side is within one order of magnitude of the left-hand side, which is quite satisfactory.
Thus, we append the list of the exact values of $\tau(2^n-1)$ for $n\leq 1206$ with their approximations for $1207\leq n\leq 2000$.
Then we compute the (now approximate) values of the doubling ratios $f(2n)/f(n)$ for $n\leq 1000$ and graph them in Figure \ref{fig:approxratios}.
The left graph suggests that the trend of ``sudden massive jumps'' continues, even though we again see many drops while awaiting for the next huge jump. The right graph simply zooms in the ratios for the indices $901\leq n\leq 1000$. Even though these ratios, which are around $80000$, are much smaller than the new peak value of about $5$ million (and also smaller than a few previous peak values), they are still larger than the peak value of about $25000$ from Figure \ref{fig:ratio234}. All this provides some more evidence for $f(2n)/f(n)\to\infty$ and, at the same time, exposes the subtlety of this claim.

\begin{figure}
\includegraphics[width=0.48\linewidth]{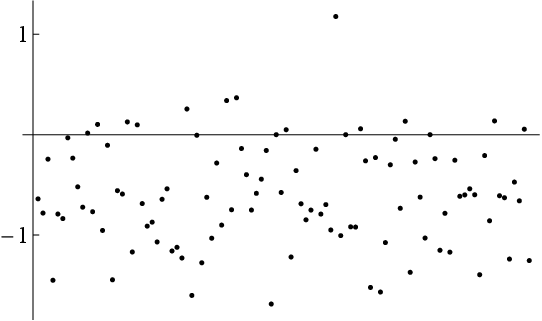}
\caption{Base $10$ logarithms of the ratios between the approximate and the true values of $\tau(2^n-1)$ for $1001\leq n\leq 1100$.}
\label{fig:testratio}
\end{figure}

\begin{figure}
\includegraphics[width=0.5\linewidth]{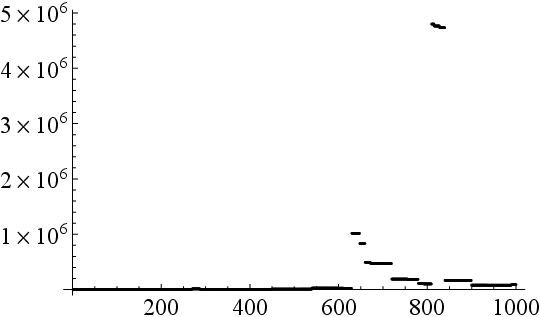}
\includegraphics[width=0.48\linewidth]{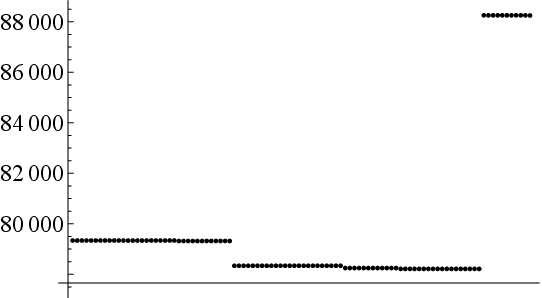}
\caption{Numerical values of the approximately evaluated ratios $f(2n)/f(n)$ for $1\leq n\leq 1000$ (left) and the last $100$ points zoomed (right).}
\label{fig:approxratios}
\end{figure}

%%%%%

\section*{Acknowledgments}
V. K. was supported by the Croatian Science Foundation under the project number HRZZ-IP-2022-10-5116 (\emph{FANAP}).
F. L. was supported in part by the ERC Synergy Project \emph{DynAMiCs}.
The authors are grateful to Andrej Dujella for informing them about the status of \emph{The Cunningham Project} \cite{CuPr} and to Michel Marcus for references to the very useful extensive data-lists \cite{NoDivPlus,NoDivCyc,NoDivMinus}.
They also want to thank Stijn Cambie for expressing a similar heuristic belief on the divergence of $f(2n)/f(n)$.
Finally, the authors are indebted to the referees, who suggested numerous improvements to the material and its exposition.

%%%%%

%\section*{Declaration of interest statement}
%The authors report that there are no competing interests to declare.

%%%%%

\end{document}